\newcounter{savefootnote}
\newcommand{\R}{\mathbb{R}}
\newcommand{\C}{\mathbb{C}}
\newcommand{\p}{\mathcal{P}}
\newtheorem{thm}{Theorem}
\newtheorem{remark}{Remark}
\renewcommand\@biblabel[1]{\textbf{#1.}} 
\renewcommand{\@listI}{\itemsep=0pt} 
\renewcommand{\maketitle}{ 
	\begin{flushright} 
		{\LARGE\@title} 
		
		\vspace{50pt} 
		
		{\large\@author Dimitris Vartziotis 
			\footnote{TWT GmbH Science \& Innovation, Department for Mathematical Research, Ernsthaldenstra{\ss}e 17, 70565 Stuttgart, Germany}\setcounter{savefootnote}{\value{footnote}}\footnote{NIKI Ltd. Digital Engineering, Research Center, 205 Ethnikis Antistasis Street, 45500 Katsika, Ioannina, Greece}
			\footnote{Corresponding author. E-mail address: dimitris.vartziotis@nikitec.gr},
			Juri Merger\setcounter{footnote}{\value{savefootnote}}\footnotemark[\value{footnote}]} 
		\\\@date 
		
		\vspace{40pt} 
	\end{flushright}
}
\title{\textbf{GETMe.anis: On geometric polygon transformations leading to anisotropy}\\ 
} 
\author{\textsc{} 
	\\{\textit{}}} 
\date{\today} 
\begin{document}

	\maketitle 
	

\renewcommand{\abstractname}{Summary} 

\begin{abstract}
	This work contributes to the analysis of linear geometric polygon transformations with the aim to force a desired shape by an iterative procedure.
\end{abstract}

\hspace*{3,6mm}\textit{Keywords:} Linear algebra, polygon transformations, anisotropy 

\vspace{30pt} 

\section{Introduction}
For several decades the finite element method is state-of-the-art when it comes to numerical simulations in various engineering fields.
Hereby, the discretization of the continuous problem of solving partial differential equations takes places on two sides.
On the one hand, the week formulation considering an infinite dimensional space of trial functions is replaced by a finite dimensional space consisting of polygonal trial functions.
On the other hand, also the domain of the partial differential equation is discretized by a polygonal mesh.
In practice it turned out that the latter point is the most time consuming step in a numerical simulation.
The convergence and error properties of the numerical approximation of the continuous problem is strictly connected to the finite element mesh quality.
Therefore, a huge effort is being done in order to derive efficient mesh generation and smoothing methods.
One direction is the geometric element transformation methods that improved the mesh quality by an iterative application of a geometric transformation to the elements of the mesh \cite{VartziotisAthanasiadisGoudasWipper2008, VartziotisWipperGETMeMixed2009,  VartziotisPapadrakakis2013, VartziotisWipperPapadrakakis2013, VartziotisBohnet2014}. 
See also \cite{VartziotisWipper2009GenNap, VartziotisWipper2009MP, VartziotisWipper2010, VartziotisHuggenberger2012, VartziotisBohnet2013, VartziotisBohnet2014b, Douglas1940, Petr1908, Merriell1965, Neumann1942, Shephard2003, Schuster1998, DingHittZhang2003, Davis1994, Davis1979} for the analysis of various aspects of geometric polygon transformations.
In line of this research the current work is extending the topic to anisotropic limit polygons, as the aim of the previous work is to generate meshes	with as regular as possible elements.

\section{Linear polygon-transformation with variations along different edges}
\label{seq:lin_trans_diff_lambda}
For a polygon $\p \in \C^n$ and a vector $w \in \C^n$ we define the following transformation
$$ \p = (z_1, \ldots, z_n)^\top \mapsto M\, \p,$$
where the transition matrix $M$ is defined as follows:
\begin{equation}
 M = \begin{pmatrix} 1-w_1 & w_1 & 0 & \cdots \\ 0 & 1 - w_2 & w_2 & \vdots \\ \vdots & \ddots & \ddots & \vdots \\ w_n & \cdots & 0 & 1-w_n  \end{pmatrix}
 \label{eq:M}
\end{equation}

This transformation maps a vertex $z_i$ to a point that is constructed as follows.
Consider the line that is orthogonal to the line connecting $z_i$ and $z_{i+1}$, whose intersection point with that line has a distance of $\Re(w_i) \, |z_{i+1}-z_{i}|$ from $z_i$, where a negative distance value represents a point on opposite side of $z_{i+1}$.
Further, we construct a line through $z_i$ whose angle with the line between $z_i$ and $z_{i+1}$ is the same as the angle of $w_i$, which equals $\arctan \left( \frac{\Im (w_i)}{\Re (w_i)}\right) $. 
The point $z_i$ is mapped on the intersection point of these two lines, which happens to be $z_i + w_i(z_{i+1}-z_i)$.
This is a generalization of the $\lambda$-$\theta$ transformation for not only $\lambda \in (0,1)$ and $\theta \in [0,\frac{\pi}{2}]$, but also for $\lambda \in \R$ and $\theta \in [0,2 \pi]$.
	
The key for the investigation of the limit behavior of the sequence $M^n \p$ are the eigenvalues and -vectors of the matrix $M$. As the eigenvectors of $M$ and $M-I$ are the same and the eigenvalues of $M-I$ are given by $\mu_k - 1$, where $\mu_k$ are eigenvalues of $M$, we investigate the matrix $M-I$.
Therefore, we compute the characteristic polynom of $M-I$, which is given by
\begin{align*} p(x) =& \det\left(xI - (M-I)\right) = \det\begin{pmatrix} x+w_1 & -w_1 & 0 & \cdots \\ 0 & x + w_2 & -w_2 & \vdots \\ \vdots & \ddots & \ddots & \vdots \\ -w_n & \cdots & 0 & x+w_n  \end{pmatrix} \\
=& \prod\limits_{i = 1}^n \left(x+w_i\right) + (-1)^{n-1} \prod\limits_{i = 1}^n (-w_i) \\
=& \prod\limits_{i = 1}^n \left(x+w_i\right) -  \prod\limits_{i = 1}^n w_i 
\end{align*}	

There are two cases:
\begin{enumerate}
	\item 
When one of the complex number $w_i$ equals zero, we can easily compute the eigenvalues of $M-I$, as we have
$$ p(x) = x \,  \prod\limits_{i = 2}^n \left(x+w_i\right), $$
where we w.l.o.g. have assumed that $w_1 = 0$ holds. 
Hence, the eigenvalues of $M$ are given by $\mu_i = 1-w_i$.
The eigenvector corresponding to $\mu_k$ is given by $v_k = (0,1,\frac{w_2 - w_k}{w_2},\frac{w_2 - w_k}{w_2}\frac{w_3 - w_k}{w_3},\ldots, \prod\limits_{j=2}^{k-1} \frac{w_j - w_k}{w_j}, 0 , \ldots)^\top$.
This can be verified by
\begin{align*}
M\, v_k = \begin{pmatrix} w_1 \\ 1-w_2 + w_2 \frac{w_2 - w_k}{w_2} \\ (1-w_3)\frac{w_2 - w_k}{w_2} + w_3  \frac{w_2 - w_k}{w_2}\frac{w_3 - w_k}{w_3} \\ \vdots  \\ (1-w_i) \prod\limits_{j=2}^{i-1} \frac{w_j - w_k}{w_j} + w_i  \prod\limits_{j=2}^{i} \frac{w_j - w_k}{w_j} \\ \vdots \end{pmatrix}
= \begin{pmatrix} 0 \\ 1 - w_k\\ (1-w_k)\frac{w_2 - w_k}{w_2}\\ \vdots  \\ (1-w_k) \prod\limits_{j=2}^{i-1} \frac{w_j - w_k}{w_j}  \\ \vdots \end{pmatrix}
\end{align*}

Note also that from the equation $(M-I)v = -w_k v$, we can conclude
$$ -w_i v_i + w_i v_{i+1} = - w_k v_i \qquad \text{for } i = 1,\ldots,n,$$
which is equivalent to 
$$ v_{i+1} = \frac{w_i - w_k}{w_i} v_i \qquad \text{for } i = 1,\ldots,n$$
for all $i$ such that $w_i \neq 0$.

Note that the transformation described above is translation and scale invariant.
Hence, we can transform each desired polygon w.l.o.g. such that $v_1 = 0$ and it follows that $w_1 = 0$, which means that we can compute the eigenvalues exactly.
\item The case, where $w_i \neq 0$ for all $i = 1,\ldots,n$ is more involved and still an open question up to now.
\end{enumerate}

\section{Constructing weights for an arbitrary limit polygon} 
In this section we analyse the inverse problem. 
Given a desired limit polygon $v \in \C^n$ we want to construct weights $w_i$ ($i = 1, \ldots, n$), such that the sequence $z^{(k)} := M^k z^{(0)}$ converges to $v$ as $n$ tends to infinity, where $M$ is given by $\eqref{eq:M}$.

This is the case, when $v$ is an eigenvector of the matrix $M$ and the corresponding eigenvalue has an absolute value larger than all other eigenvalues.
Here, we can ignore the eigenvalue for the eigenvector $(1,\ldots,1)^\top$, as the corresponding $n$-gon is the pointed polygon, which has no effect on the shape.

Assume that $v \in \C$ is given and define the auxiliary weights $\widetilde{w}_i =  \frac{v_i}{v_{i+1}-v_i}$.
Then, $v$ is an eigenvector with eigenvalue $2$ of the corresponding transformation matrix $\widetilde{M}$.
This follows by
\begin{align*}
(\widetilde{M} v)_i &= (1-\widetilde{w}_i) v_i + \widetilde{w}_i v_{i+1} \\
          &= \left(1-\frac{v_i}{v_{i+1}-v_i}\right) v_i + \frac{v_i}{v_{i+1}-v_i} v_{i+1} \\
          &= v_i - \frac{v_i}{v_{i+1}-v_i} v_i + \frac{v_i}{v_{i+1}-v_i} v_{i+1} \\
          &= 2v_i.
\end{align*}
Let us denote by $1+\mu_i$ ($i = 1,\ldots,n-1$) the remaining eigenvalues of $\widetilde{M}$, where we have w.l.o.g. $\mu_1 = 1$, as we have $\widehat{M} (1,\ldots,1)^\top = (1,\ldots,1)^\top$.
It is in general not true that we have $|1+\mu_i| < 2 $ for all $i = 2,\ldots,n-1$.
Hence, we use transformed weights $w_i := \lambda \widetilde{w}_i$ for a complex number $\lambda \in \C$.
Observe that the corresponding matrix $M$ has the same eigenvectors as $\widetilde{M}$ and but the  eigenvalues are given by $\mu_1 = 1$, $1+\lambda \mu_2$, $\ldots$, $1+\lambda \mu_{n-1}$, and $1+\lambda$, where $\lambda$ is the eigenvalue to the eigenvector $v$, which is the desired $n$-gon.
Our aim is now to find a $\lambda$ such that we have
$$ |1+\lambda| > |1+ \lambda \mu_i| \qquad \text{for all } i = 2, \ldots , n-1 $$

Therefore, we have the following theorem.
\begin{thm}
	Let $\mu$ be a complex number. The set $\Lambda_\mu$ of all complex numbers $\lambda \in \C$ such that
	$$ |1+\lambda|^2 > |1 + \lambda \, \mu|^2 $$
	holds is given by 
	\begin{itemize}
		\item Case 1: ($|\mu| < 1$)  the exterior of the circle with radius $\omega$ and midpoint $\omega$, where we have $\omega=\frac{1-\overline{\mu}}{|\mu|^2-1} $ , i.e. $$\Lambda_\mu = \{\lambda \in \C :~ |\lambda - \omega| >  |\omega|\},$$
		\item Case 2: ($|\mu| = 1$)  an open half plane of all complex number, whose angle to the point $\overline{\mu} - 1$ is less than $90^\circ$, i.e.
		$$\Lambda_\mu = \left\{\lambda \in \C :~  \measuredangle \lambda \in \left(\measuredangle (\overline{\mu} - 1) - \frac{\pi}{2}, \measuredangle(\overline{\mu} - 1) + \frac{\pi}{2} \right) \right\},$$
		\item Case 3: ($|\mu| > 1$)  the interior of the circle with radius $\omega$ and midpoint $\omega$, where we have $\omega=\frac{1-\overline{\mu}}{|\mu|^2-1} $ , i.e. $$\Lambda_\mu = \{\lambda \in \C :~ |\lambda - \omega| <  |\omega|\}.$$
	\end{itemize}
	\label{th:th}
\end{thm}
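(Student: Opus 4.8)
The plan is to reduce the stated strict inequality to a single real inequality in $\lambda$ by expanding the two squared moduli, and then to complete the square. First I would write $|1+\lambda|^2 = 1 + 2\Re(\lambda) + |\lambda|^2$ and $|1+\lambda\mu|^2 = 1 + 2\Re(\lambda\mu) + |\mu|^2|\lambda|^2$; subtracting and cancelling the constant $1$, the condition $|1+\lambda|^2 > |1+\lambda\mu|^2$ becomes
$$ 2\,\Re(\lambda(1-\mu)) > (|\mu|^2-1)\,|\lambda|^2 . $$
Setting $a := 1-\overline{\mu}$, so that $\overline a = 1-\mu$ and hence $\Re(\lambda(1-\mu)) = \Re(\lambda\,\overline a)$, and $c := |\mu|^2-1$, this reads $2\Re(\lambda\,\overline a) > c\,|\lambda|^2$. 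The sign of $c$ is exactly the trichotomy $|\mu|<1$, $|\mu|=1$, $|\mu|>1$ of the statement.

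In Cases~1 and~3 we have $c\neq 0$, so I would divide through by $c$ --- \emph{being careful that this reverses the inequality when $c<0$} --- and then complete the square via the identity $|\lambda - a/c|^2 = |\lambda|^2 - \tfrac{2}{c}\Re(\lambda\,\overline a) + |a/c|^2$, which is valid because $c$ is real. This rewrites the condition as $|\lambda-\omega|^2 - |\omega|^2 < 0$ when $c>0$ (Case~3) and as $|\lambda-\omega|^2 - |\omega|^2 > 0$ when $c<0$ (Case~1), with $\omega = a/c = \frac{1-\overline\mu}{|\mu|^2-1}$; these are precisely the open disk $|\lambda-\omega|<|\omega|$ and its exterior $|\lambda-\omega|>|\omega|$. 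In Case~2 we have $c=0$, the quadratic term vanishes, and the condition collapses to the linear inequality $\Re(\lambda(1-\mu)) = \Re(\lambda\,\overline a)>0$; reading $\Re(\lambda\,\overline a)$ as the Euclidean inner product of $\lambda$ with $a = 1-\overline\mu$ exhibits the solution set as an open half-plane through the origin, and a short computation of $\arg(\lambda(1-\mu))$ identifies it with the half-plane in the statement.

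The argument is elementary throughout; the only step that genuinely requires care is the sign bookkeeping in Cases~1 and~3, since dividing by the negative quantity $|\mu|^2-1$ flips the inequality and is exactly what interchanges ``interior of the circle'' with ``exterior''. As a sanity check one may note that $\omega\neq 0$ whenever $c\neq 0$ (an $\omega=0$ would force $\mu=1$ and hence $|\mu|=1$), so the circles in Cases~1 and~3 are nondegenerate, and that $\lambda=0$ lies in none of the three sets, consistent with $|1+0|^2 > |1+0|^2$ being false.
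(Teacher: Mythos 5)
Your expansion-and-complete-the-square argument is exactly what the paper's one-line proof (``Algebraic manipulations'') must mean, and for Cases~1 and~3 it is complete and correct: the reduction to $2\Re(\lambda(1-\mu)) > (|\mu|^2-1)|\lambda|^2$, the division by $c=|\mu|^2-1$ with the inequality flip for $c<0$, and the identity $|\lambda-\omega|^2=|\lambda|^2-\tfrac2c\Re(\lambda\overline a)+|\omega|^2$ with $\omega=a/c$ give precisely the disk and its exterior, and your sanity checks ($\omega\neq0$, $\lambda=0$ on the boundary) are sound.

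The one step you wave through is the final identification in Case~2, and it does not go through as stated. Your (correct) computation gives $\Re\bigl(\lambda(1-\mu)\bigr)>0$, i.e.\ the open half-plane of all $\lambda$ whose angle to $1-\overline\mu$ is less than $\tfrac{\pi}{2}$, in other words $\measuredangle\lambda\in\bigl(\measuredangle(1-\overline\mu)-\tfrac{\pi}{2},\,\measuredangle(1-\overline\mu)+\tfrac{\pi}{2}\bigr)$. The theorem as printed uses the opposite direction $\overline\mu-1$, so the ``short computation of $\arg(\lambda(1-\mu))$'' you defer would not identify your set with the statement's set --- it would reveal that they are complementary half-planes. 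A concrete test makes this unambiguous: for $\mu=-1$ the condition $|1+\lambda|^2>|1-\lambda|^2$ is equivalent to $\Re\lambda>0$, which is the half-plane about $1-\overline\mu=2$, whereas the printed half-plane about $\overline\mu-1=-2$ is $\Re\lambda<0$. So either carry out that computation explicitly and record that the statement's Case~2 needs $1-\overline\mu$ in place of $\overline\mu-1$ (Cases~1 and~3 are consistent with your derivation, since there $\omega$ is built from $1-\overline\mu$), or you have silently flipped a sign; as written, the claim ``identifies it with the half-plane in the statement'' is false. You should also dispose of the degenerate subcase $\mu=1$ separately, where $1-\overline\mu=0$, the half-plane description is meaningless, and $\Lambda_\mu=\emptyset$ because the strict inequality $|1+\lambda|^2>|1+\lambda|^2$ never holds.
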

\begin{proof}
	Algebraic manipulations.
\end{proof}

\begin{thm}
	Let $\mu_i$ be defined as above. Assume that the set $\bigcap_{i = 2}^{n-1}\Lambda_{\mu_i}$ is not empty. Then, there exist weights $w_i$ such that $v$ is the limit polygon of the transformation matrix $M$.
	\label{th:th2}
\end{thm}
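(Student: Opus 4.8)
The plan is to reduce Theorem~\ref{th:th2} to Theorem~\ref{th:th} together with the spectral set-up of the preceding paragraphs. Fix any $\lambda$ in the set $\bigcap_{i=2}^{n-1}\Lambda_{\mu_i}$, which is nonempty by hypothesis, and define $w_i := \lambda\widetilde{w}_i$ with $\widetilde{w}_i := v_i/(v_{i+1}-v_i)$; this presupposes that $v$ has no two equal consecutive vertices, which we may take as part of the meaning of a genuine target polygon. Let $M$ be the matrix \eqref{eq:M} associated with these $w_i$. A direct comparison of entries shows $M = I + \lambda(\widetilde{M}-I)$, so $M$ and $\widetilde{M}$ have the same (generalized) eigenvectors and an eigenvalue $\nu$ of $\widetilde{M}$ corresponds to the eigenvalue $1+\lambda(\nu-1)$ of $M$. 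In particular $v$ is an eigenvector of $M$ with eigenvalue $1+\lambda$, the vector $(1,\dots,1)^\top$ is an eigenvector with eigenvalue $1$, and the remaining eigenvalues of $M$ are $1+\lambda\mu_i$ for $i=2,\dots,n-1$, exactly as announced before the statement.

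Next I would check that $1+\lambda$ strictly dominates, in modulus, every eigenvalue of $M$ except the one attached to $(1,\dots,1)^\top$, which may be ignored since it corresponds to the pointed polygon and hence has no effect on the shape. Indeed, $\lambda\in\Lambda_{\mu_i}$ means, by Theorem~\ref{th:th}, that $|1+\lambda|^2>|1+\lambda\mu_i|^2$, hence $|1+\lambda|>|1+\lambda\mu_i|$, for every $i=2,\dots,n-1$. Moreover no admissible $\lambda$ equals $0$ or $-1$: $\lambda=0$ would give the false inequality $1>1$, and $\lambda=-1$ the false inequality $0=|1+\lambda|^2>|1-\mu_i|^2\ge 0$. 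Thus $1+\lambda\ne 0$, and $1+\lambda$ is the unique eigenvalue of largest modulus among the eigenvalues of $M$ other than the one of $(1,\dots,1)^\top$. This is precisely the condition singled out at the start of this section for $v$ to be the limit polygon of $M$, so under this spectral reading of ``limit polygon'' the proof is complete.

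If one instead insists on the dynamical statement that the recentred and rescaled iterates $z^{(k)}=M^{k}z^{(0)}$ converge, up to similarity, to $v$, I would pass to the quotient $V:=\C^{n}/\langle(1,\dots,1)^\top\rangle$: since the transformation is translation invariant and $(1,\dots,1)^\top$ spans an $M$-invariant line, $M$ descends to a map $\overline M$ on $V$ whose eigenvalues are $1+\lambda$, with eigenvector the image $\overline v$ of $v$, and the $1+\lambda\mu_i$. Expanding the image of $z^{(0)}$ in a Jordan basis of $\overline M$ and dividing by $(1+\lambda)^{k}$, every contribution other than the one along $\overline v$ is bounded by a fixed polynomial in $k$ times $\max_i\bigl(|1+\lambda\mu_i|/|1+\lambda|\bigr)^{k}\to 0$, so the normalized iterates converge to a nonzero multiple of $\overline v$, i.e. to a polygon similar to $v$. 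The main obstacle to making this last step airtight is that it requires $1+\lambda$ to be a \emph{simple} eigenvalue of $\overline M$ — equivalently, $2$ a simple eigenvalue of $\widetilde M$, which one computes to amount to the nondegeneracy condition $\sum_{j}v_j/v_{j+1}\ne n$ on the target (after translating $v$ so that no vertex sits at the origin) — and it requires $z^{(0)}$ to have nonzero $\overline v$-component, which fails only on a proper subspace of starting polygons. Under these mild genericity assumptions on $v$ and $z^{(0)}$ the argument goes through; without them one should content oneself with the spectral formulation of the previous paragraph.
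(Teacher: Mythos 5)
Your argument is correct and follows essentially the same route as the paper, whose proof of Theorem~\ref{th:th2} is just the one-liner ``choose $\lambda$ in the intersection, conclude $|1+\lambda|>|1+\lambda\mu_i|$ for $i=2,\dots,n-1$, which implies the statement.'' Your additional third paragraph supplies the dynamical convergence argument the paper leaves implicit, and you rightly flag the hidden hypotheses it needs (simplicity of the dominant eigenvalue $1+\lambda$ and a nonzero $v$-component of $z^{(0)}$), which the paper does not address at all.
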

\begin{proof}
	Choose $\lambda \in \bigcap_{i = 2}^{n-1}\Lambda_{\mu_i}$ and define the weights as above. 
	Then, we have $|1+\lambda| > |1+ \lambda \mu_i|$ for all $i = 2, \ldots , n-1$.
	This has the statement of the theorem as a consequence.
\end{proof}

\subsection{Triangles ($n=3$)}
For transformations on triangles ($n=3$) the intersection in Theorem \autoref{th:th2} is trivially non-empty as there is only one competing eigenvalue. Consequently, there exist weights such that any triangle is the limit of the polygon sequence $M^k z^{(0)}$ regardless the initial triangle $z^{(0)}$.
In particular, we can choose $\lambda = - \mu_2^{-1}$ and transform the only competing eigenvalue to zero.
This means that any initial triangle is transformed in exactly one step to the desired triangle.
Hence, we have the following algorithm:
\begin{enumerate}
	\item Choose arbitrary $v \in \C^3$.
	\item Compute temporary transformation weight $\widetilde{w}_i = \frac{v_i}{v_{i+1}-v_i}$.
	\item Compute the third eigenvalue of $M-I$, which is given by $\mu =  \sum\limits_{i=1}^3 \widetilde{w}_i \widetilde{w}_{i+1}$. The other two eigenvalues are $0$ and $1$.
	\item Compute scaled weights $w_i = - \widetilde{w}_i \, \mu^{-1}$.
\end{enumerate}
		
\subsection{Quadrangles ($n=4$)}
In this subsection we concentrate on quadrangles.
Here, we have two competing eigenvalues which we denote by $\mu_2$ and $\mu_3$.
We have to analyze the possibility that the intersection $\Lambda_{\mu_2} \cap \Lambda_{\mu_2}$ is empty. 
This can happen for example if we have case 3 in Theorem \autoref{th:th} for both eigenvalues and the line between $\omega_2 = \frac{1-\overline{\mu_2}}{|\mu_2|^2-1}$ and $\omega_3 = \frac{1-\overline{\mu_3}}{|\mu_3|^2-1}$ covers the origin $z = 0$, e.g. $\omega_2 = - \omega_3$.
In this situation the corresponding circles do not intersect.
The same can happen for half planes in case 2.

\begin{remark}
	It is up to now an open question whether there are quadrangles $v \in \C^4$ such that these cases occur and the intersection is empty. 
	In this case we have proven that there exist no weights $w_i$ such that $v$ is the limit of the polygon sequence.
\end{remark}

Assuming that the intersection is non-empty we can choose $\lambda$ in the following way:
\vspace{0.5cm}

\noindent
\begin{tabular}{|l|c|} \hline
	Case: & $\lambda$ \\ \hline
	$|\mu_2| < 1 ~ \wedge ~ |\mu_3| < 1$ & $ 2~(\omega_2 + \omega_3) $ \\ \hline
	$|\mu_2| < 1 ~ \wedge ~ |\mu_3| = 1$ & $ 3~|\omega_2|~(\overline{\mu}_2-1) $ \\ \hline
	$|\mu_2| < 1 ~ \wedge ~ |\mu_3| > 1$ & $ \omega_2 + \frac{\omega_3 - \omega_2}{| \omega_3 - \omega_2 |} \frac{|\omega_3 - \omega_2| + |\omega_3| + |\omega_2|}{2} $ \\ \hline
	$|\mu_2| = 1 ~ \wedge ~ |\mu_3| < 1$ & $ 3~|\omega_2|~(\overline{\mu}_3-1) $ \\ \hline
	$|\mu_2| = 1 ~ \wedge ~ |\mu_3| = 1$ & $ \overline{\mu}_2 -1 + \overline{\mu}_3 - 1 $ \\ \hline
	$|\mu_2| = 1 ~ \wedge ~ |\mu_3| > 1$ & $ \omega_2 + \frac{\overline{\mu}_2 - 1}{|\overline{\mu}_2 - 1|} \frac{x + x + |\omega_2|}{2} $\\ \hline
	$|\mu_2| > 1 ~ \wedge ~ |\mu_3| < 1$ & $ \omega_2 + \frac{\omega_3 - \omega_2}{| \omega_3 - \omega_2 |} \frac{|\omega_3 - \omega_2| + |\omega_3| + |\omega_2|}{2} $ \\ \hline
	$|\mu_2| > 1 ~ \wedge ~ |\mu_3| = 1$ & $ \omega_2 + \frac{\overline{\mu}_3 - 1}{|\overline{\mu}_3 - 1|} \frac{x + x + |\omega_2|}{2} $ \\ \hline
	$|\mu_2| > 1 ~ \wedge ~ |\mu_3| > 1$ & $ \omega_2 + \frac{(\omega_3 - \omega_2)}{|\omega_3 - \omega_2|} \frac{|\omega_2| + |\omega_3 - \omega_2| - |\omega_3|}{2}$ \\ \hline
\end{tabular}
\vspace{0.5cm}

\noindent where $x$ is given by the distance of $\omega_2$ and the line through the origin which is orthogonal to the vector $\overline{\mu}_2 - 1$.

\section{Discussion}
This work extends the idea of geometric polygon transformations to regularize finite element meshes beyond regular elements.
In use cases where anisotropy inside a mesh is desired, alternative transformations can be used to force finite elements to obtain a certain anisotropy.
As previously shown this is always possible for triangles and mostly also for quadrangles.

A further approach can be to temporarily change the metric for the GETMe algorithm such that it regularizes the elements of the mesh with respect to a metric that induces an anisotropy as desired.

\bibliographystyle{plain}
\bibliography{Vartziotis_Merger_-_GETMe.anis_On_geometric_polygon_transformations_leading_to_anisotropy}
\end{document}